\documentclass[11pt,oneside,reqno,english]{amsart}
\usepackage[T1]{fontenc}
\usepackage[latin9]{inputenc}
\usepackage{color}
\usepackage{babel}
\usepackage{amstext}
\usepackage{amsthm}
\usepackage{amssymb}
\usepackage{setspace}
\usepackage{mathtools}
\usepackage[unicode=true,pdfusetitle,
 bookmarks=true,bookmarksnumbered=false,bookmarksopen=false,
 breaklinks=true,pdfborder={0 0 0},pdfborderstyle={},backref=false,colorlinks=true]
 {hyperref}

\makeatletter
\theoremstyle{plain}
\newtheorem{thm}{\protect\theoremname}
\theoremstyle{plain}
\newtheorem{lem}[thm]{\protect\lemmaname}
\newtheorem{prop}[thm]{Proposition}

\usepackage{calrsfs}
\usepackage{graphicx}
\usepackage{color}
\usepackage{perpage}
\usepackage{upquote}
\usepackage{bbm}
\usepackage[shortlabels]{enumitem}
\usepackage{stmaryrd} 
\usepackage{extarrows} 
\usepackage{romanbar}



\raggedbottom
\MakePerPage{footnote}
\gdef\SetFigFontNFSS#1#2#3#4#5{} 
\gdef\SetFigFont#1#2#3#4#5{} 
\def\clap#1{\hbox to 0pt{\hss#1\hss}}

\DeclareMathOperator{\cov}{cov}
\DeclareMathOperator{\essmax}{ess\, max}
\DeclareMathOperator{\essmin}{ess\, min}

\def\eps{\varepsilon}

\def\PP{\mathbb{P}}

\def\RR{\mathbb{R}}
\def\cY{\mathcal{Y}}
\def\cX{\mathcal{X}}

\definecolor{myblue}{rgb}{0.09,0.32,0.44} 
\hypersetup{pdfborder={0 0 0},pdfborderstyle={},colorlinks=true,linkcolor=myblue,citecolor=myblue,urlcolor=blue}

\theoremstyle{remark}
\newtheorem*{qst*}{Question}
\newtheorem*{rmrks*}{Remarks}

\newlength{\tempindent}
\newcommand{\lazyenum}{
\setlength{\tempindent}{\parindent}
\begin{enumerate}[leftmargin=0cm,itemindent=0.7cm,labelwidth=\itemindent,labelsep=0cm,align=left,label=\arabic*)]
\setlength{\parskip}{\smallskipamount}
\setlength{\parindent}{\tempindent}
}

\makeatletter
\renewcommand{\andify}{%
  \nxandlist{\unskip, }{\unskip{} \@@and~}{\unskip{} \@@and~}}
\def\author@andify{%
  \nxandlist {\unskip ,\penalty-1 \space\ignorespaces}%
    {\unskip {} \@@and~}%
    {\unskip \penalty-2 \space \@@and~}%
}
\let\@wraptoccontribs\wraptoccontribs
\makeatother


\ifpdf
\def\afs#1#2{\href{#1}{\nolinkurl{#2}}}
\else
\def\afs#1#2{\burlalt{#1}{#2}}
\fi


\makeatother

\providecommand{\lemmaname}{Lemma}
\providecommand{\theoremname}{Theorem}

\begin{document}

\title[Arithmetic progressions in Brownian motion]{Arithmetic progressions in the trace of Brownian motion in space}
\author{ Itai Benjamini \and Gady Kozma}

\date{13.10.18 }

\begin{abstract}
It is shown that the trace of $3$ dimensional Brownian motion contains arithmetic progressions of length $5$
and no arithmetic progressions of length $6$  a.s.
\end{abstract}

\maketitle

\section{Introduction}

In this note we comment that a.s.\ the trace of a $3$ dimensional Brownian motion contains arithmetic progressions of length $5$, and no arithmetic progressions of length $6$.

Similarly, the maximal arithmetic progression in the trace of Brownian motion in $\mathbb{R}^{d}$
is $3$ for $d = 4,5$ and $2$ above that (we will only prove the three dimensional result here). On the other hand, the trace of a $2$ dimensional Brownian motion
a.s.\ contains arbitrarily long arithmetic progressions starting at the origin and having a fixed difference.

Consider $n$ steps simple random walk on the $d$ dimensional square grid $\mathbb{Z}^{d}$,
look at the number of arithmetic progressions  of length $3$ in the range, study the distribution and large deviations?

\smallskip
\noindent
{\bf Question:}
In the  large deviations regime, is there a deterministic  limiting shape? 
\smallskip



\section{Proofs}
We start with the two dimensional case. 

\begin{prop}
The trace of $2$ dimensional Brownian motion
a.s. contains arbitrarily long arithmetic progressions starting at the origin and having a fixed difference.
\end{prop}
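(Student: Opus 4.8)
\emph{A possible approach.}\ Since $B_0=0$ lies on the trace, the statement is equivalent to asserting that almost surely there is a \emph{single} vector $v\neq0$ with $kv$ in the range $R:=\{B_t:t\ge0\}$ for every integer $k\ge1$; granting this, $0,v,2v,\dots,nv$ is, for each $n$, an arithmetic progression of length $n+1$ inside the trace, all with the one common difference $v$, and these are arbitrarily long. So the plan is to produce such a $v$.

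I would get $v$ from a nested intersection of non-degenerate random compact sets, exploiting that planar Brownian motion is neighbourhood recurrent (hence hits every fixed bounded set of positive Hausdorff dimension almost surely) and never returns to its starting point. Set $V_1:=\{B_t:1\le t\le2\}$ and $S_1:=2$; then $V_1$ is compact, $\dim V_1=2$ a.s., and $0\notin V_1$ a.s. Inductively, given a compact set $V_{n-1}$, measurable up to a stopping time $S_{n-1}$, with $\dim V_{n-1}>0$ and with $kv\in R$ for all $k\le n-1$ and all $v\in V_{n-1}$, the set $K:=nV_{n-1}$ is bounded of positive dimension, and by the strong Markov property the path after time $S_{n-1}$ is a fresh planar Brownian motion, independent of $K$. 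I would run it until its range has swallowed a positive-dimensional piece of $K$: let $S_n$ be the least integer $>S_{n-1}$ with
\[
\dim\bigl(K\cap\{B_t:S_{n-1}\le t\le S_n\}\bigr)\ \ge\ \tfrac13\dim V_{n-1},
\]
and put $V_n:=V_{n-1}\cap\tfrac1n\{B_t:S_{n-1}\le t\le S_n\}$. This $V_n$ is compact, contained in $V_{n-1}$, has $\dim V_n\ge\tfrac13\dim V_{n-1}>0$ (Hausdorff dimension being scale invariant), and every $v\in V_n$ satisfies $nv\in R$ on top of $kv\in R$ for $k<n$; so the induction closes, with $\dim V_n\ge2\cdot3^{-(n-1)}$ throughout. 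Finally $V_\infty:=\bigcap_{n}V_n$, a decreasing intersection of non-empty compacts, is non-empty; any $v^*\in V_\infty$ lies in $V_1$ so $v^*\neq0$, and in every $V_k$ so $kv^*\in R$ for all $k$ --- which is exactly what is wanted.

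The substantive point --- and the step I expect to be the real obstacle --- is that this waiting terminates, i.e.\ that $S_n<\infty$ almost surely. This comes down to the following statement about planar Brownian motion: \emph{if $K\subseteq\RR^2$ is a fixed bounded set with $\dim K>0$, then almost surely the range of a planar Brownian motion run for all time meets $K$ in a set of Hausdorff dimension at least $\tfrac12\dim K$} (in fact exactly $\dim K$). Given this, since $K\cap\{B_t:S_{n-1}\le t\le m\}$ increases with $m$ to $K\cap\{B_t:t\ge S_{n-1}\}$ and Hausdorff dimension is countably stable, the dimension of the finite-time intersections exceeds $\tfrac13\dim K$ for some finite integer $m$, so $S_n<\infty$; applying the statement conditionally on the past through the strong Markov property is legitimate since $K$ is then deterministic. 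This intersection-dimension fact is classical in spirit --- planar Brownian motion has a range of ``codimension zero'' and so preserves the dimension of a fixed set under intersection, which one proves by the second-moment (energy) method combined with recurrence --- but it is where all the content lies; everything else (polarity of points, scale invariance and countable stability of dimension, Cantor's intersection theorem) is routine.
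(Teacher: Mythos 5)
Your proposal is correct and is essentially the paper's argument: both produce the common difference $v$ as a point of a nested intersection of non-empty compact sets of positive Hausdorff dimension, each obtained by intersecting the previous one with a rescaled piece of the Brownian path, the whole construction resting on the fact (from Peres / M\"orters--Peres) that planar Brownian motion intersects a fixed set of positive dimension in a set of comparable dimension. The one substantive difference is where the ``almost surely'' is earned: you invoke an a.s., infinite-time version of the intersection fact at each stage (which is indeed true, but is itself obtained from the positive-probability unit-time statement by neighbourhood recurrence, a Harnack comparison and conditional Borel--Cantelli --- exactly the chaining work the paper does explicitly), whereas the paper uses the positive-probability version throughout and upgrades to probability one only at the end via scale invariance.
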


\begin{proof}
Given a set $S$ of Hausdorff dimension $1$ in the Euclidean plane,  $2$ dimensional Brownian motion $W$ running for
unit time will   intersect  $S$ in a set of Hausdorff dimension $1$ as well, with positive probability, see e.g. \cite{P}, \cite{MP}.
Examine the unit circle. With positive probability Brownian motion run for unit time  intersects the unit circle in a set $S_1$ of dimension $1$. To each point in $S_1$ add it to itself to get $S_2$ a set of dimension $1$. Let $\tau_1\ge 1$ be the first time (after 1) our Brownian motion hits the circle with radius $3/2$. Examine it now in the time interval $[\tau_1,\tau_1+1]$.
By the Harnack principle \cite[Theorem 3.42]{MP}, the probability that Brownian motion started from $W(\tau_1)$ to intersect $S_2$ in a set of dimension 1 is comparable to that of Brownian motion starting from 0 which, as already stated, is bounded away from 0. Hence $W[\tau_1,\tau_1+1]$ will again intersect $S_2$ in a set of dimension $1$. To each point in the intersection of the form $2x, x\in S_1$ add $x$ and call the resulting set $S_3$, again of dimension 1. Continue in the same manner to get arbitrarily long arithmetic progressions. Scale invariance implies that we get arbitrarily long arithmetic progression with probability $1$.
\end{proof}

The argument above shows that with positive probability the trace of a unit time two dimensional Brownian motion
admits uncountably many arithmetic progression of arbitrary length and difference $1$.
\medskip

We now prove the high dimensional result. 

\begin{lem}\label{lem:no6}
A $3$-dimensional Brownian motion contains no arithmetic progressions
of length 6, a.s.
\end{lem}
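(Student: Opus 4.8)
The plan is to show that the random set of six-tuples of times whose Brownian images form an arithmetic progression is a.s.\ empty, by a dyadic covering/moment estimate; the common difference may be assumed non-zero, the degenerate case being a six-fold multiple point, which is excluded since three-dimensional Brownian motion has no triple points.

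\emph{Reductions.} It is enough to work on a fixed time interval $[0,1]$ and, since there are only $6!$ ways the spatial order of the six points can relate to their time order, to rule out each ordering separately; all are handled identically, so I take the natural one. Thus I must exclude times $t_1<\cdots<t_6$ with $W(t_1),\dots,W(t_6)$ an arithmetic progression \emph{in that order}, i.e.\ with $D_1=D_2=\cdots=D_5$, where $D_i:=W(t_{i+1})-W(t_i)$ are independent and $D_i\sim N(0,(t_{i+1}-t_i)I_3)$ (the other orderings give the condition that the consecutive increments be parallel with prescribed non-zero integer ratios, and are treated the same way). Equivalently, the $\RR^{12}$-valued process $(t_1,\dots,t_6)\mapsto(D_2-D_1,\dots,D_5-D_4)$ must avoid $0$ on $\{t_1<\cdots<t_6\}\subset[0,1]^6$.

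\emph{First moment.} For each $n$ let $N_n$ be the number of dyadic six-tuples $t_1<\cdots<t_6$ of mesh $2^{-n}$ in $[0,1]$ with $\|D_{i+1}-D_i\|\le\varepsilon_n$ for $i=1,\dots,4$, where $\varepsilon_n$ is a constant times the Brownian modulus of continuity $2^{-n/2}\sqrt n$ on length-$2^{-n}$ intervals; if an exact progression exists then $N_n\ge 1$ for all large $n$, so were $\EE[N_n]\to0$ a Borel--Cantelli argument would finish. Conditioning on $D_1,\dots,D_5$ successively and integrating out the ``diagonal'' direction, $\PP(\|D_{i+1}-D_i\|\le\varepsilon_n,\ i\le 4)\lesssim \varepsilon_n^{12}\,\delta_{\min}^{3/2}\prod_{i=1}^5\delta_i^{-3/2}$ with $\delta_i=t_{i+1}-t_i$ and $\delta_{\min}=\min_i\delta_i$. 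Summing over dyadic configurations gives $\EE[N_n]\lesssim \varepsilon_n^{12}\,2^{7n}$, which (ignoring logarithms, $\varepsilon_n\sim 2^{-n/2}$) is of order $2^{n}$. This is the critical dimension: the first moment does not tend to $0$, so by itself it is not enough --- exactly the borderline phenomenon of the Dvoretzky--Erd\H{o}s--Kakutani theorem on the absence of double points of four-dimensional Brownian motion.

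\emph{The crux.} To close the gap I would refine the first moment to a multi-scale (equivalently, second-moment) argument. Organize the $\varepsilon$-approximate progressions into a tree: a scale-$2^{-n}$ approximate configuration has boundedly many scale-$2^{-(n+1)}$ refinements, and an exact progression is precisely an infinite ray. Using the Markov property to restart $W$ at the dyadic times of a candidate configuration --- thereby decoupling the scale and location of the progression from its fine structure --- one shows that, conditionally on an approximate progression at scale $\rho$, the probability that it refines to scale $\rho/2$ is bounded away from $1$ by a universal constant, i.e.\ the associated branching is subcritical; iterating over dyadic scales then forces the tree to be a.s.\ finite, so there is no exact progression. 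Establishing this subcriticality constant is the whole difficulty: it requires the independence of the increments together with sharp Gaussian small-ball bounds for the joint law of $(D_1,\dots,D_5)$ conditioned on the coarser scale, and this is where essentially all of the work lies.
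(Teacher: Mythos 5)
There are two genuine gaps here. The first is in your first moment: it is inflated by degenerate configurations, and this is not cosmetic. The dominant contribution to your bound $\varepsilon_n^{12}2^{7n}$ comes from six\-/tuples in which every gap $t_{i+1}-t_i$ equals the mesh $2^{-n}$; for such a tuple each coordinate of $D_{i+1}-D_i$ has standard deviation of order $2^{-n/2}$ while $\varepsilon_n\sim 2^{-n/2}\sqrt n$, so the event $\|D_{i+1}-D_i\|\le\varepsilon_n$ holds with probability close to $1$ and hence $\EE[N_n]\gtrsim 2^n$ for trivial reasons. These tuples approximate a ``progression'' of common difference $\approx 0$, i.e.\ six points crowded into one ball of radius $O(\varepsilon_n)$, not an arithmetic progression; excluding exact multiple points does nothing to remove them from the approximate count. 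You must fix $\delta>0$ and count only configurations with $|W(t_{i+1})-W(t_i)|\ge\delta$ (a genuine progression has some nonzero difference, so a countable union over $\delta\downarrow 0$ finishes). With that restriction $\EE[N_n]$ becomes essentially bounded in $n$ --- but still not $o(1)$, so you are right that a first moment alone cannot close the argument.

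The second, more serious, gap is that the mechanism you propose for the critical case is wrong. The refinement structure is critical, not subcritical: a scale-$\rho$ approximate progression has a bounded number (roughly $8^2$, refining the base point and the common difference) of candidate scale-$\rho/2$ refinements, each hit with conditional probability about $2^{-6}$, so the expected number of children is of order $1$ --- consistent with the separated first moment being bounded but not decaying. Moreover, ``the probability of refining is bounded away from $1$'' is not subcriticality and does not force the tree to be finite: a node may have many children, and the offspring of distinct nodes are far from independent. The way out --- and what the paper does, precisely along the Dvoretzky--Erd\H{o}s--Kakutani lines you invoke --- is the reverse second moment: show that the expected number of hit $\eps$-configurations is $\le C$ uniformly in $\eps$, and show that \emph{conditioned on} a fixed configuration $x$ being hit there are, with conditional probability $1-O(1/|\log\eps|)$, at least $c|\log\eps|$ hit configurations, one in each dyadic annulus around $x$, the events at distinct scales being approximately independent. (Establishing that approximate independence, via a stopping-time decomposition of the path into excursions between spheres around the $x_i$, is where essentially all the work lies.) Markov's inequality then gives $\PP(\text{count}>0)\le C/|\log\eps|\to 0$. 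Your tree picture can be salvaged only by converting it into this conditional-abundance statement; as written, the claimed subcriticality is both unproved and, at the critical dimension, false.
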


\begin{proof}
By scaling invariant we may restrict our attention to arithmetic progressions
contained in the unit ball $B$, and to spacings at least $\delta$
for some $\delta>0$. Denote the Brownian motion by $W$. If it contains
an arithmetic progression then for every $\varepsilon>0$ one may find
$x_{1},\dotsc,x_{6}\in B\cap\frac13 \varepsilon\mathbb{Z}^{d}$ such that
$W\cap B(x_{i},\varepsilon)\ne\emptyset$ and such that the $x_{i}$
form an $\varepsilon$-approximate arithmetic progressions, by which
we mean that $|x_{i-1}+x_{i+1}-2x_{i}|\le4\varepsilon$ for $i=2,3,4,5$.
Further, the $x_{i}$ are $\delta$-separated in the sense that $|x_{i}-x_{i+1}|\ge\delta-2\varepsilon$
for $i=1,2,3,4,5$. Denote the set of such $x_{i}$ by $\mathcal{X}$
and define
\begin{gather*}
H_x=\mathbbm{1}\{W\cap B(x_{i},\varepsilon)\ne\emptyset\;\forall i\in\{1,\dotsc,6\}\}\qquad x=(x_{1},\dotsc,x_{6})\\
X=X(\varepsilon)=\sum_{x\in\mathcal{X}}\mathbbm{1}\{H_x\}.
\end{gather*}
We now claim that
\begin{equation}
\mathbb{E}(X)\le C\qquad\mathbb{E}(X^{2})\ge c|\log\varepsilon|\label{eq:moments}
\end{equation}
where the constants $c$ and $C$ may depend on $\delta$. Both calculations
are standard: the first (that of $\mathbb{E}(X)$), is an immediate
corollary of the fact that 3$d$ Brownian motion starting from 0 hits the ball $B(v,\eps)$ with probability $\approx\eps/(|v|+\eps)$, see e.g.\ \cite[corollary 3.19]{MP}. Here and below, $\approx$ means that the ratio of the two quantities is bounded above and below by constants that depend only on $\delta$. This gives
\begin{equation}\label{eq:miloyodea}
\PP(H_x)\approx \frac{\eps^6}{d(0,x)+\eps}
\end{equation}
where $d(0,x)\coloneqq\min\{d(0,x_i):i=1,\dotsc,6\}$.
Denote by $\mathcal{X}_n$ the set of $x\in\mathcal{X}$ such that $\eps 2^n<d(0,x)\le\eps 2^{n+1}$, with $\mathcal{X}_0$ having the lower bound removed. We can now write
\[
\mathbb{E}(X)=\sum_{n=0}^{\log 1/\varepsilon}\sum_{x\in\mathcal{X}_n}\mathbb{P}(H_x)
\stackrel{\textrm{\eqref{eq:miloyodea}}}{\le}
C\sum_{n=0}^{\log 1/\varepsilon}2^{3n}\cdot 2^{-n}\cdot \eps^{-3}\cdot \eps^5\le C
\]
where $2^{3n}$ is the number of possibilities for the $x_i$ closest to 0 for $x\in\mathcal{X}_n$ (this, and all other quantities in this explanation are up to constants); where $2^{-n}$ is $\PP(W\cap B(x_i,\eps)\ne\emptyset)$; where $\eps^{-3}$ is the number of possibilities for $x_2-x_1$ (we use here that the determination of $x_1$ and $x_2-x_1$ leave only a constant number of possibilities for $x_3,\dotsc,x_4$); and where $\eps^5$ is the probability to hit all of $B(x_1,\eps),\dotsc,B(x_6,\eps)$ except $B(x_i,\eps)$ given that you have hit $B(x_i,\eps)$.

The calculation of $\mathbb{E}(X^{2})$ is similar, we write $\mathbb{E}(X^{2})=\sum_{x,y\in\mathcal{X}}\mathbb{P}(H_x\cap H_y)$
and estimate the probability directly. We get about constant contribution
from each set $\{x,y:|x_i-y_i|\approx 2^{-n}\;\forall i\}$ for every $n$, hence the $|\log\varepsilon|$ term.

We now make a somewhat stronger claim on the interaction between different
$x$. We claim that there exists $\lambda>0$ such that, for any $x$,
\begin{equation}
\mathbb{P}(H_x\cap\{X\le\lambda|\log\varepsilon|\})\le\frac{C}{|\log\varepsilon|}\mathbb{P}(H_x).\label{eq:conditioned}
\end{equation}
To see this fix $x$ and let, for each scale $k\in\{1,\dotsc,\lfloor|\log\varepsilon|\rfloor\}$,
\begin{gather*}
  X_{k}\coloneqq\sum_{y\in\cY_k}\mathbbm{1}\{H_y\}\\
  \cY_k\coloneqq\{y\in\mathcal{X}:2^{k}\varepsilon\le|y_{i}-x_{i}|<2^{k+1}\varepsilon\quad\forall i\in\{1,\dotsc,6\}\}
\end{gather*}
($X_k$ depends on $x$, of course, but we omit this dependency from the notation). A calculation identical to the above shows that $\mathbb{E}(X_{k}\,|\,H_{x})\ge c$ and
$\mathbb{E}(X_{k}^{2}\,|\,H_{x})\le C$ so
\begin{equation}\label{eq:shalosh vakhetzi}
\mathbb{P}(X_{k}>0\,|\,H_{x})\ge c.
\end{equation}
Further, the events $X_{k}>0$ (still conditioned on $H_{x}$) are
approximately independent in the following sense:
\begin{lem}\label{lem:pffff}For each $x\in\cX$ and $k\in \{1,\dotsc,\lfloor\log(\delta/4 \eps)\rfloor\}$,
\begin{equation}
\cov(X_{k}>0,X_{l}>0\,|\,H_{x})\le2e^{-c|k-l|}.\label{eq:cov}
\end{equation}
\end{lem}
\begin{proof}
Assume for concreteness that $k<l$ and that $l-k$ is sufficiently large (otherwise the claim holds trivially, if only the $c$ in the exponent is taken sufficiently small). Define two radii $r<s$ between $2^k\eps$ and $2^l\eps$ as follows:
\[
r\coloneqq 2^{(2/3)k+(1/3)l}\eps\qquad s\coloneqq 2^{(1/3)k+(2/3)l}\eps.
\]
Next, define a sequence of stopping times: the even ones for exiting balls of radius $s$ and the odd ones for entering balls of radius $r$. In a formula, let $\tau_0=0$ and 
\begin{align*}
  \tau_{2m+1}&\coloneqq\inf\Big\{t\ge \tau_{2m}:W(t)\in\bigcup_{i=1}^6 B(x_i,r)\Big\}\\
  \tau_{2m}&\coloneqq\inf\Big\{t\ge \tau_{2m-1}:W(t)\not\in\bigcup_{i=1}^6 B(x_i,s)\Big\}
\end{align*}
Let $M$ be the first number such that $\tau_{2M+1}=\infty$. With probability 1 $M$ is finite. We now claim that
\begin{equation}\label{eq:Mis6}
\PP(H_x\cap\{M\ge 6+\lambda\})\le \frac{\eps^6}{d(0,x)+\eps}\big(Cr/s\big)^\lambda\qquad\forall\lambda=1,2,\dotsc
\end{equation}
To see \eqref{eq:Mis6} assume $d(0,x)>c$ for simplicity. Then every visit to $B(x_i,\eps)$ from $\partial B(x_i,r)$ ``costs'' $\eps/r$ in the probability, while every visit of $B(x_j,r)$ from $\partial B(x_i,s)$ costs $r/s$ if $i=j$ and $r$ if $i\ne j$. Since $H_x$ requires a visit to all of $x_1,\dotsc,x_6$ we have to pay the costs $\eps/r$ and $r$ at least 6 times, and the costs of $r/s$ (or $r$, which is smaller) at least $\lambda$ times. Counting over the order in which these visits happen adds no more than a $C^\lambda$. This shows \eqref{eq:Mis6} in the case that $d(0,x)>c$. The other case is identical and we skip the details.

\def\cK{\mathcal{K}}
\def\cL{\mathcal{L}}
Since $\PP(H_x)\approx \eps^6/(d(0,x)+\eps)$ (recall \eqref{eq:miloyodea}) this shows that the case $M>6$ is irrelevant. Indeed, if we define $\cK=\{X_k>0\}\cap\{M=6\}$ and $\cL=\{X_l>0\}\cap\{M=6\}$ then
\begin{equation}\label{eq:khamesh vakhetzi}
|\cov(\cK,\cL|H_x)-\cov(X_k>0,X_l>0|H_x)|\le \frac{Cr}{s}
\end{equation}
(for $l-k$ sufficiently large) and we may concentrate on $\cov(\cK,\cL|H_x)$.

Let $\mu$ be the measure on $\RR^{36}$ giving the distribution of $W(\tau_1),\dotsc,W(\tau_{12})$ (we will not distinguish between $(\RR^{3})^{12}$ and $\RR^{36}$). For an event $E$ we will use $\PP(E|W=u)$ as a short for
\[
\PP(E\,|\,W(\tau_i)=u_i\;\forall i\in\{1,\dotsc,12\},M=6)
\]
(which is of course a $\mu$-almost everywhere defined function). We next observe that for $E$ equal to any of $\cL$, $H_x$ and $\cK\cap H_x$ the function $\PP(E\,|\,W=u)$ is nearly constant i.e.
\begin{equation}\label{eq:constant}
\frac{\essmax \PP(E\,|\,W=u)}{\essmin \PP(E\,|\,W=u)} \le 1+2 e^{-c|k-l|}
\end{equation}
This is because $\cK$ and $H_x$ depend only on the behaviour inside the balls $B(x_i,2^k\eps)$ while $u_{2m+1}$ are on $\partial B(x_i,r)$. This follows from the well-known fact that the distribution of $W$ on the first hitting times (after $\tau_{2m+1}$) of $B(x_i,2^k\eps)$ is independent of $u_{2m+1}$, up to an error of $(2^k\eps)/r$; and similarly, the conditioning on exiting $B(x_i,s)$ at $u_{2m+2}$ only adds an error of $(2^k\eps)/s$. For the convenience of the reader we recall briefly how this is shown: consider Brownian motion started from a $y_1\in\partial B(x_i,r)$ and let $y_2\in \partial B(x_i ,2^{k+1}\eps)$ be the first point visited in $B(x_i,2 ^{k+1}\eps)$, let $y_3$ be the last, and let $y_4$ be the first point visited in $B(x_i,s)$. Then the joint distribution of $y_2$, $y_3$ and $y_4$ can be written easily using the Poisson kernel (see \cite[Theorem 3.44]{MP} for its formula). For example, the density of $y_2$ is $(r-\eps 2^{k+1})|y_2-y_1|^{-3}$ (the density in $\RR^3$) from which we need to subtract the density after exiting $B(x_i,s)$, which is given by an integral of similar expressions. The exact form does not matter, only the fact that the $y_1$ dependency comes from the term $|y_2-y_1|$ is nearly constant in $y_1$ in the sense above. The same holds for the density of the transition from $y_3$ to $y_4$ and the density between $y_2$ and $y_3$ is of course completely independent of $y_1$ and $y_4$. Conditioning on exiting in a given $y_4$ is merely restricting to a subspace and normalising, conserving the near independence. This justifies (\ref{eq:constant}) in this case.

We have ignored here the case that $0\in B(x_i,r)$ for some $i$, in which case $u_1$ is inside $B(x_i,r)$ rather than on its boundary, but in this case $u_1$ is constant and certainly does not affect anything. This shows \eqref{eq:constant} for $E=H_x$ and $\cK\cap H_x$.

The argument for the other case is similar, becuase $\cL$ depends only on what happens outside $B(x_i,2^l\eps)$ and $u_{2m}$ is on $\partial B(x_i,s)$ (this time without exceptions). Hence we have only an error of $s/(2^l\eps)$. All these errors are exponential in $l-k$. This shows \eqref{eq:constant} is all 3 cases. In particular we get, for all three cases for which \eqref{eq:constant} holds, that
\begin{equation}\label{eq:constant2}
  \PP(E\,|\,W=u)=\PP(E)(1+O(e^{-c|k-l|}))
\end{equation}
which holds for $\mu$-almost every $u$.

The last point to note is that, conditioning on $W=u$ makes $\cL$ independent of $H_x$ and of $\cK$ as the first depends only on what happens in the odd time intervals, i.e.\ between $\tau_{2m}$ and $\tau_{2m+1}$, $m=0,\dotsc,6$ while the other two depend on what happens in the even time intervals, between $\tau_{2m-1}$ and $\tau_{2m}$, $m=1,\dotsc,6$. Hence
\begin{align*}
  \PP(\cL\cap H_x)&=\int \PP(\cL\cap H_x\,|\,W=u)\,d\mu(u)\\
  \textrm{by independence}\qquad &=\int\PP(\cL\,|\,W=u)\PP(H_x\,|\,W=u)\,d\mu(u)\\
  \textrm{by \eqref{eq:constant2}}\qquad &=
  \PP(\cL)\PP(H_x)(1+O(e^{-c|k-l|})).
\end{align*}
A similar argument gives
\[
\PP(\cL\cap\cK\cap H_x)=\PP(\cL)\PP(\cK\cap H_x)(1+O(e^{-c|k-l|})).
\]
Together these two inequalities bound $\cov(\cK,\cL\,|\,H_x)$. With \eqref{eq:khamesh vakhetzi} the lemma is proved.
\end{proof}

With (\ref{eq:cov}) established we can easily see (\ref{eq:conditioned}), by
using Chebyshev's inequality for the variable
$\#\{k:X_{k}>0\}$, with \eqref{eq:shalosh vakhetzi} giving the first moment and \eqref{eq:cov} the covariance. (In fact, it is not difficult to get a much better
estimate than $C/|\log\varepsilon|$, an $\varepsilon^{c}$ is also
possible. But we will not need it).

Summing (\ref{eq:conditioned}) over all $x$ and using \eqref{eq:moments} gives
\[
\mathbb{P}(X\in(0,\lambda|\log\varepsilon|))\le\frac{C}{|\log\varepsilon|}
\]
This, with $\mathbb{E}(X)\le C$ shows that $\mathbb{P}(X>0)\le C/|\log\varepsilon|$,
proving lemma \ref{lem:no6}.
\end{proof}
\begin{lem}
A $3$-dimensional Brownian motion contains arithmetic progressions
of length 5, a.s.
\end{lem}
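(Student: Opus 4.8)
The plan is to run the second-moment method, now in the ``positive'' direction, keeping the framework of Lemma~\ref{lem:no6}. As there, scaling lets us restrict to arithmetic progressions contained in the unit ball $B$ with common difference of length at least a fixed $\delta>0$. For $\varepsilon>0$ let $\mathcal X=\mathcal X(\varepsilon)$ be the set of $\varepsilon$-approximate, $\delta$-separated $5$-term progressions $x=(x_1,\dotsc,x_5)\in(B\cap\frac13\varepsilon\mathbb Z^3)^5$, with exactly the conventions of Lemma~\ref{lem:no6}; put $H_x=\mathbbm{1}\{W\cap B(x_i,\varepsilon)\ne\emptyset\ \forall i\}$ and $X=X(\varepsilon)=\sum_{x\in\mathcal X}\mathbbm{1}\{H_x\}$. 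Since only five balls have to be met, \cite[corollary 3.19]{MP} gives the analogue of \eqref{eq:miloyodea}, $\mathbb P(H_x)\approx\varepsilon^5/(d(0,x)+\varepsilon)$, and the computation of $\mathbb E(X)$ displayed right after \eqref{eq:miloyodea} goes through with the single change that the factor $\varepsilon^5$ (the chance of hitting the remaining balls once one has been hit) becomes $\varepsilon^4$; the geometric sum then evaluates to $\mathbb E(X)\approx\varepsilon^{-1}$ rather than $\approx1$. It is exactly this divergence of the first moment that should make $5$-term progressions abundant.

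The crux is the companion bound $\mathbb E(X^2)\le C\,\mathbb E(X)^2$, i.e.\ $\mathbb E(X^2)\le C(\delta)\varepsilon^{-2}$. I would obtain it from the slightly stronger statement $\mathbb E(X\mid H_x)\le C\,\mathbb E(X)$ for every $x\in\mathcal X$, which immediately gives $\mathbb E(X^2)=\sum_x\mathbb P(H_x)\,\mathbb E(X\mid H_x)\le C\,\mathbb E(X)^2$. To bound $\mathbb E(X\mid H_x)=\sum_y\mathbb P(H_y\mid H_x)$ one splits $y$ by its overlap scale $m$, namely $\max_i|x_i-y_i|\approx 2^m\varepsilon$. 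The $y$ that are $\Theta(1)$-separated from $x$ (constants depending on $\delta$) contribute a total $\approx\mathbb E(X)$, since conditioning on $H_x$ barely affects the probability of hitting far-away balls. For the overlapping $y$ one estimates $\mathbb P(H_y\mid H_x)$ by the optimal-tour heuristic of Lemma~\ref{lem:no6} --- the conditioned walk must make, from each of the five clusters, a detour of ``cost'' $\approx\varepsilon/(|x_i-y_i|+\varepsilon)$ --- and exploits the decisive combinatorial fact that, because $i\mapsto x_i-y_i$ is itself an affine function of $i$ (both $x$ and $y$ being progressions), at most one of the five separations $|x_i-y_i|$ can be much smaller than the overlap scale $2^m\varepsilon$. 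Counting the admissible $y$ against this cost, each overlap scale $m$ is seen to contribute $\approx 2^m$ to $\mathbb E(X\mid H_x)$, so the whole sum is $\approx\varepsilon^{-1}\approx\mathbb E(X)$. Here the $5$-term case parts ways with the $6$-term case of \eqref{eq:moments}: with one ball per progression fewer, the per-scale contribution is geometric in $m$ instead of flat, hence dominated by the top scale, so no $|\log\varepsilon|$ appears --- which is why the plain second-moment method now suffices, whereas Lemma~\ref{lem:no6} needed the refined estimate \eqref{eq:conditioned}.

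Given $\mathbb E(X)\approx\varepsilon^{-1}$ and $\mathbb E(X^2)\le C\varepsilon^{-2}$, the Paley--Zygmund inequality yields $\mathbb P(X(\varepsilon)>0)\ge\mathbb E(X)^2/\mathbb E(X^2)\ge c(\delta)>0$, uniformly in small $\varepsilon$. To produce an exact progression, observe that $\limsup_{m\to\infty}\{X(1/m)>0\}$ has probability at least $c$ by the reverse Fatou lemma; on that event, for infinitely many $m$, $W$ meets a $(1/m)$-approximate $\delta$-separated $5$-tuple lying in the compact ball $B$, and any subsequential limit of these $5$-tuples is an exact, $\delta$-separated (hence nondegenerate) $5$-term progression all of whose points lie in the closed set $W[0,1]$. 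Hence $p_0:=\mathbb P(W[0,1]\text{ contains a nondegenerate }5\text{-term arithmetic progression})>0$. Finally, ``containing such a progression'' is invariant under translating a subset of $\mathbb R^3$, so the events $E_k=\{W[k-1,k]\text{ contains one}\}$, $k=1,2,\dotsc$, depend only on the increments of $W$ over the disjoint intervals $[k-1,k]$; they are therefore i.i.d.\ with $\mathbb P(E_k)=p_0>0$, and since $\sum_k\mathbb P(E_k)=\infty$ the Borel--Cantelli lemma gives that almost surely some $E_k$ occurs, so almost surely the trace of $W$ contains a $5$-term arithmetic progression.

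The one genuinely laborious step should be the bound $\mathbb E(X\mid H_x)\le C\,\mathbb E(X)$: one must make the optimal-tour estimate for $\mathbb P(H_y\mid H_x)$ precise, verify the ``at most one anomalously close pair'' claim from the affineness of $i\mapsto x_i-y_i$, and perform the scale count --- in the overlap scale and in the scale of the single close pair --- that turns the sum into a convergent geometric series. Everything else is the routine hitting-probability arithmetic already used in Lemma~\ref{lem:no6}, together with a soft compactness argument and Borel--Cantelli.
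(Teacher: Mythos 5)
Your proposal is correct and follows essentially the same route as the paper: the second-moment method with $\mathbb{E}(X)\approx\varepsilon^{-1}$ and $\mathbb{E}(X^2)\le C\varepsilon^{-2}$ (which the paper simply declares ``straightforward''; your overlap-scale sketch, including the observation that affineness of $i\mapsto x_i-y_i$ forbids two anomalously close pairs, is a correct way to carry it out), followed by Paley--Zygmund and a compactness limit of approximate progressions. The only differences are in the soft endgame --- you use reverse Fatou on $\limsup_m\{X(1/m)>0\}$ where the paper uses the nested decreasing events $\{X(\lambda^k)>0\}$, and Borel--Cantelli over disjoint unit time intervals where the paper invokes scaling invariance --- and both substitutions are valid.
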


\begin{proof}
Let $\varepsilon$ and $X=X(\varepsilon)$ be as in the proof of the
previous lemma (except we now fix $\delta$ to be, say, $\frac{1}{10}$).
It is straightforward to calculate
\[
\mathbb{E}(X(\varepsilon))\ge\frac{c}{\varepsilon}\qquad\mathbb{E}(X(\varepsilon)^{2})\le\frac{C}{\varepsilon^{2}}
\]
 which show that $\mathbb{P}(X(\varepsilon)>0)\ge c$. A simple calculation
shows that for some $\lambda>0$ we have that $X(\lambda\varepsilon)>0\implies X(\varepsilon)>0$.
Hence $\{X(\lambda^{k})>0\}$ is a sequence of decreasing events with
probabilities bounded below. This implies that
\[
\mathbb{P}\Big(\bigcap_{k}\{X(\lambda^{k})>0\}\Big)>0.
\]
The event of the intersection can be described in words as follows:
for every $k$ there exists $x_{1}^{(k)},\dotsc,x_{5}^{(k)}\in B$
which are $\frac{1}{10}$-separated and $\lambda^{k}$-approximate
arithmetic progression such that $W\cap B(x_{i}^{(k)},\lambda^{k})\ne\emptyset$
for $i\in\{1,\dotsc,5\}$. Taking a subsequential limit we get $x_{i}^{(k_{n})}\to x_{i}$
and these $x_{i}$ will be $\frac{1}{10}$-separated, will form an
arithmetic progression, and will be on the path of $W$. So we conclude
\[
\mathbb{P}(W\text{ contains a 5-term arithmetic progression in }B)>0.
\]
Scaling invariance now shows that the probability is in fact $1$.
\end{proof}

\end{document}